\newtheorem{theorem}{Theorem}%[section]
\newtheorem{lemma}[theorem]{Lemma}
\newcommand{\proof}{\noindent {\it Proof.\hspace{4mm}}}
\newcommand{\qed}{\hfill \mbox{$\Box$}}
\newtheorem{theirtheorem}{Theorem}
\newtheorem{theirconj}{Conjecture}
\newcommand{\N}{{\mathbb N}}
\newcommand{\subgp}[1]{\langle{#1}\rangle}
\newcommand{\beeq}{\begin{eqnarray*}}
\newcommand{\eneq}{\end{eqnarray*}}
\begin{document}
\title{A note on  Pollard's Theorem }
\author{ {Y. O. Hamidoune}\thanks{
UPMC Univ Paris 06,
 E. Combinatoire, Case 189, 4 Place Jussieu,
75005 Paris, France.}
\and {O. Serra}\thanks{
 Universitat
Polit\`ecnica de Catalunya,
Jordi Girona, 1,
E-08034 Barcelona, Spain.}
}

\date{}

\maketitle

\begin{abstract}
Let $A,B$ be nonempty subsets of a an abelian group $G$. Let
$N_i(A,B)$ denote the set of elements of $G$ having $i$ distinct
decompositions as a product of an element of $A$ and an element of
$B$. We prove that
$$
\sum _{1\le i \le t} |N_i (A,B)|\ge  t(|A|+|B|- t-\alpha+1+w)-w,
$$
where $\alpha $ is the largest size of a  coset contained in $AB$
and $w=\min (\alpha-1,1)$, with a strict inequality if  $\alpha\ge
3$ and  $t\ge 2$, or if  $\alpha\ge 2$ and $t= 2$.

This result is a local extension of results by Pollard and
Green--Ruzsa and extends also  for $t>2$ a recent result of
Grynkiewicz,  conjectured by Dicks--Ivanov (for non necessarily
abelian groups) in connection to the famous Hanna Neumann problem in
Group Theory.
\end{abstract}

\hspace{1cm}{\bf MSC Classification:} 11B60, 11B34, 20D60.

\section{Introduction}

Groups will be written multiplicatively.

Let $G$ be a group and let $A, B$ be two nonempty subsets
of $G$. The {\em representation function} $r_{A,B}:G\rightarrow \N$
is defined by the relation $$r_{A,B}(x)=|(xB^{-1})\cap A|.$$

The subgroup generated by a subset $S$ will be denoted by
$\subgp{S}$.

Let  $X$ be a subset  of an  abelian group $G$.
Recall that the subgroup $\{x\in G :Xx=X\}$ is called the {\em period} of $X$.

Our notations follow almost everywhere the terminology of Nathanson
\cite{natlivre}. In particular  we write
$$N_t (A,B)=\{x\in AB : r_{A, B}(x)\ge t\}.$$

Pollard \cite{P1}   proved  the following remarkable result:

\begin{theirtheorem}[Pollard \cite{P1}]\label{pol1}
Let $A$ and $B$ be nonempty subsets of  a group with a prime order
$p$ and let $t\le \min (|A|,|B|).$ Then

$$\sum _{1\le i \le t} |N_i (A,B)| \ge t\min (p,|A|+|B|-t).$$
\end{theirtheorem}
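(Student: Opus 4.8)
The plan is first to reduce to the generic range and to an ``excess'' form of the inequality. If $|A|+|B|\ge p+t$, then inclusion--exclusion in $\Z_p$ gives $r_{A,B}(x)=|A\cap(x-B)|\ge |A|+|B|-p\ge t$ for every $x$, so $N_i(A,B)=\Z_p$ for $1\le i\le t$ and the left-hand side equals $tp=t\min(p,|A|+|B|-t)$: this case is settled. So assume $|A|+|B|\le p+t$, whence $\min(p,|A|+|B|-t)=|A|+|B|-t$. Using $\sum_{i\ge 1}|N_i(A,B)|=\sum_x r_{A,B}(x)=|A||B|$ together with $\sum_{1\le i\le t}|N_i(A,B)|=\sum_x\min(r_{A,B}(x),t)=|A||B|-\sum_x(r_{A,B}(x)-t)^+$, the statement is \emph{equivalent} to the excess bound
\[
\sum_{x}\bigl(r_{A,B}(x)-t\bigr)^+\ \le\ (|A|-t)(|B|-t).
\]
When $\min(|A|,|B|)=t$ both sides are $0$, so I may also assume $|A|,|B|\ge t+1$.

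I would then induct on $|A|+|B|$. Fix $a^*\in A$ and put $A^*=A\setminus\{a^*\}$. Since $r_{A^*,B}(x)=r_{A,B}(x)-\mathbf 1_{a^*+B}(x)$, and deleting $a^*$ costs nothing at points of $a^*+B$ where $r_{A,B}>t$ and exactly $1$ at the others, one gets the identity
\[
\sum_{1\le i\le t}|N_i(A,B)|\ =\ \sum_{1\le i\le t}|N_i(A^*,B)|\ +\ \bigl|\{\,b\in B:\ r_{A,B}(a^*+b)\le t\,\}\bigr|.
\]
Since $|A^*|=|A|-1\ge t$, the inductive hypothesis bounds the first term below by $t(|A|+|B|-1-t)$, so the step closes as soon as we can pick $a^*\in A$ (or symmetrically $b^*\in B$) with $|\{b\in B:\ r_{A,B}(a^*+b)\le t\}|\ge t$, i.e. with $|B\cap(N_{t+1}(A,B)-a^*)|\le |B|-t$. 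As $|B\cap(N_{t+1}(A,B)-a^*)|\le |N_{t+1}(A,B)|$, this holds for \emph{every} $a^*$ once $|N_{t+1}(A,B)|\le\min(|A|,|B|)-t$; so the induction is immediate unless
\[
|N_{t+1}(A,B)|\ >\ \min(|A|,|B|)-t .
\]

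The step I expect to be the main obstacle is precisely this last case. It is not a corner case: already for intervals $A=\{0,\dots,a-1\}$, $B=\{0,\dots,b-1\}$ with $b\ge t+2$ one computes $N_{t+1}=\{t,\dots,a+b-2-t\}$, so $|N_{t+1}|=a+b-1-2t>\min(a,b)-t$, and these are exactly the configurations in which the excess bound is \emph{tight}. Thus one cannot locate the needed ``boundary'' point of $A$ or $B$ by counting — averaging $|B\cap(N_{t+1}-a^*)|$ over $a^*$ returns $\sum_{x\in N_{t+1}}r_{A,B}(x)$, which in the tight cases exceeds $|B|-t$ — and structural information is required. Here I would invoke the standard deeper tools: Kneser's theorem together with the Dyson $e$-transform $A\mapsto A\cup(B+e)$, $B\mapsto B\cap(A-e)$ (which preserves $|A|+|B|$ and only shrinks $A+B$), to reduce to the near-extremal case, where by Vosper's theorem $A$ and $B$ are arithmetic progressions sharing a common difference and the excess bound is checked by the interval computation above; or, alternatively, Hamidoune's isoperimetric method, which is designed to control the whole profile $\bigl(|N_i(A,B)|\bigr)_{i\le t}$ under such operations. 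The delicate point is that these transforms can \emph{increase} individual multiplicities $r_{A,B}(x)$ and change the total mass $|A||B|$, so no single scalar is monotone and one has to follow the entire profile. The configuration $A+B=\Z_p$ is not special here, since the identity and dichotomy above hold verbatim; when moreover $|N_2(A,B)|\le\min(|A|,|B|)-t$ one even gets $t$ indices $b$ with $r_{A,B}(a^*+b)=1$ for any $a^*$, while the complementary sub-case falls again under the structural analysis.
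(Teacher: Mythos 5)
Your reduction to the excess form and the one-element deletion identity are both correct, but the argument has a genuine gap at exactly the point you flag: the case $|N_{t+1}(A,B)|>\min(|A|,|B|)-t$ is never closed. You correctly observe that this case contains all the tight configurations (arithmetic progressions), that averaging cannot locate the required element $a^*$, and that the Dyson transform controls neither $|A||B|$ nor the individual multiplicities $r_{A,B}(x)$, so that no single scalar is monotone along it. But after these observations you only list candidate tools (Kneser, Vosper, the isoperimetric method) without executing any reduction; in particular Vosper's theorem classifies the extremal pairs for $|A+B|=|A|+|B|-1$, not the extremal pairs for Pollard's inequality at a given $t$, and no argument is supplied that brings the problem into its range of applicability. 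As it stands, the main inductive step is missing, and the machinery you invoke is in any case far heavier than what the theorem requires.

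The idea that makes the induction close, and which your single-element deletion bypasses, is the pointwise inequality $\min(t,x+y)\ge\min(v,x)+\min(t-v,y)$ applied to the splitting $r_{A,B}=r_{A\cap B',\,A\cup B'}+r_{A\setminus B',\,B'\setminus A}$ for a suitable translate $B'$ of $B$; this is inequality (\ref{POLD3}) of Lemma \ref{basic} above, implicit in Pollard's original paper. One chooses the translate so that $0<v<|B|$ where $v=|A\cap B'|$, which is possible unless $A+B=A$ (a degenerate case in $\Z_p$, forcing $A=\Z_p$ or $|B|=1$). If $v\ge t$, apply induction on $|B|$ to the pair $(A\cap B',A\cup B')$, whose cardinalities still sum to $|A|+|B|$ while the smaller set has shrunk. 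If $v<t$, evaluate the first summand exactly by (\ref{POLD2}), namely $v\,|A\cup B'|=v(|A|+|B|-v)$, and apply induction to the pair $(A\setminus B',B'\setminus A)$, whose cardinalities sum to $|A|+|B|-2v$; adding the two bounds gives $v(|A|+|B|-v)+(t-v)(|A|+|B|-v-t)=t(|A|+|B|-t)$, with no inverse-theoretic input whatsoever. This submodular decomposition is the engine both of Pollard's proof and of the proof of Theorem \ref{main} in this paper; your element-by-element induction runs into precisely the wall it is designed to avoid.
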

Very recently Nazarewicz, O'Brien,  O'Neill and C. Staples
\cite{naza} obtained     the equality cases in Pollard's Theorem
\ref{pol1}. A restricted--sum version of the inequalities  is
obtained by Caldeira and Dias da Silva \cite{dias}.

Under a Chowla's type condition, Pollard \cite{P2} proved a
generalization of his theorem to composite moduli. He also suggested
the problem of finding  conditions implying the validity of his
inequalities in an abelian group.

In connection with old questions on sum-free sets \cite{GR}, the
following generalization of Pollard's Theorem to arbitrary abelian
groups was given by Green--Ruzsa:

\begin{theirtheorem}[Green--Ruzsa \cite{GR}]
Let $G$ be an abelian group and let $A$ and $B$ be nonempty subsets
of $G$ and let $t\le \min (|A|,|B|).$ Then $$\sum _{1\le i \le t}
|N_i (A,B)|\ge t\min (|G|,|A|+|B|-\mu (G)-t+1),$$ where $\mu (G)$
denotes the cardinality of the largest   subgroup of $G$ different
from $G$.\label{GR1}
\end{theirtheorem}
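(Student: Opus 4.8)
The plan is to reduce the statement to Kneser's theorem through a multiplicity identity and then climb in $t$ by an inductive transform. First I would record the \emph{layer-cake identity}
$$\sum_{1\le i\le t}|N_i(A,B)|=\sum_{x\in G}\min\bigl(r_{A,B}(x),\,t\bigr),$$
which holds because $N_1(A,B)\supseteq N_2(A,B)\supseteq\cdots$, so each $x$ is counted in exactly $\min(r_{A,B}(x),t)$ of the sets $N_1,\dots,N_t$. Write $S_t=S_t(A,B)$ for this quantity and $P=|A|+|B|-\mu(G)$. It suffices to treat the \emph{unsaturated} regime $P-t+1\le|G|$, where the target reads $S_t\ge t(P-t+1)$; the \emph{saturated} regime (target $t|G|$) then follows by the same induction, since there $|AB|=|G|$ is forced and each $N_i$ turns out to be all of $G$, so that $S_t=t|G|$.

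For $t=1$ the identity gives $S_1=|N_1(A,B)|=|AB|$, and the required bound $|AB|\ge\min(|G|,P)$ is exactly Kneser's theorem: letting $H$ be the period of $AB$, one has $|AB|=|AH|+|BH|-|H|\ge|A|+|B|-|H|$, and either $AB=G$ or $H\ne G$, whence $|H|\le\mu(G)$ and $|AB|\ge P$. This is the only place the quantity $\mu(G)$ enters, through the period of a single sumset.

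For the inductive step I would use the elementary pointwise inequality, valid for integers $r\ge r'\ge0$ with $r'\le(r-1)^+$,
$$\min(r,t)\ \ge\ \mathbf{1}[r\ge1]+\min\bigl(r',\,t-1\bigr).$$
Hence, \emph{if} one can produce from $(A,B)$ a pair $(A',B')$ with $A'B'\subseteq AB$, with $r_{A',B'}(x)\le(r_{A,B}(x)-1)^+$ for every $x$, and with $|A'|+|B'|\ge|A|+|B|-\delta$ for some $\delta\le2$, then summing over $x$ (using $\{x:r_{A,B}(x)\ge1\}=AB$) yields the recursion
$$S_t(A,B)\ \ge\ |AB|+S_{t-1}(A',B').$$
Feeding in the base bound $|AB|\ge P$ and the inductive hypothesis $S_{t-1}(A',B')\ge(t-1)(P-\delta-t+2)$ gives $S_t\ge tP-(t-1)\delta-(t-1)(t-2)$, which exceeds the target $tP-t(t-1)$ by exactly $(t-1)(2-\delta)\ge0$. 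Thus the induction closes for any admissible transform with $\delta\le2$, and the surplus is strictly positive once $t\ge2$ and $\delta\le1$, which is the mechanism behind the strict-inequality refinements.

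The entire difficulty is therefore concentrated in producing this \emph{multiplicity-decrementing transform}: a pair $(A',B')$ whose representation function is dominated by $(r_{A,B}-1)^+$ while $|A'|+|B'|$ drops by at most two. No sub-pair $A'\subseteq A$, $B'\subseteq B$ can work, since peeling one representation off \emph{every} element of $AB$ forces $|A'||B'|\le|A||B|-|AB|$, already saturating at $\delta=2$ for the extremal aperiodic configurations where $|AB|=|A|+|B|-1$; this tension between the aperiodic and the saturated cases is precisely the subtle point of Pollard-type results. I would attempt the construction with a Dyson--Kneser $e$-transform $A'=A\cup(Be)$, $B'=B\cap(Ae^{-1})$, which preserves $|A'|+|B'|$ and satisfies $A'B'\subseteq AB$, and verify the pointwise domination of the representation function directly; when $AB$ has a nontrivial period $H$ I would instead pass to the quotient $G/H$, where $\mu(G)\ge|H|$, apply the inequality there, and recombine the full $H$-cosets, the within-coset multiplicities being uniform. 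Verifying the uniform multiplicity drop under the transform, and matching the bookkeeping so that only the allowed $\mu(G)$ and $t(t-1)$ are lost, is the main obstacle; Pollard's Theorem~\ref{pol1} is available to seed the aperiodic cyclic base cases of this reduction.
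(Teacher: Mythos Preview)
The paper does not prove Theorem~B at all: it is quoted as background from Green--Ruzsa, and the paper's own contribution is Theorem~\ref{main}, a local refinement whose proof runs by induction on $|B|$ via the Pollard splitting
\[
r_{A,B}(x)=r_{A\cap B,\,A\cup B}(x)+r_{A\setminus B,\,B\setminus A}(x)
\]
(recorded as Lemma~\ref{basic}), after a translation making $|A\cap B|<|B|$. So there is no ``paper's proof of Theorem~B'' to match; the relevant comparison is with this $|B|$--induction scheme, which is also essentially how Green--Ruzsa and Pollard argue.

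Your scheme is genuinely different---induction on $t$ rather than on $|B|$---and the whole weight sits on the step you flag as ``the main obstacle'': producing $(A',B')$ with $r_{A',B'}(x)\le (r_{A,B}(x)-1)^+$ pointwise and $|A'|+|B'|\ge |A|+|B|-2$. The Dyson $e$--transform you propose does \emph{not} have this pointwise domination. Take $G=\Z$, $A=\{0,1,2\}$, $B=\{0,1\}$, $e=2$: then $A'=A\cup(B+e)=\{0,1,2,3\}$, $B'=B\cap(A-e)=\{0\}$, and $r_{A',B'}(0)=1$ while $r_{A,B}(0)=1$, so $(r_{A,B}(0)-1)^+=0$ is violated. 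More structurally, any pair with $|A'|+|B'|=|A|+|B|$ and $A'B'\subseteq AB$ must satisfy $\sum_x r_{A',B'}(x)=|A'||B'|\ge |A||B|-$(something small), whereas $\sum_x (r_{A,B}(x)-1)^+=|A||B|-|AB|$; for aperiodic sumsets with $|AB|$ large this forces the pointwise bound to fail somewhere. The $e$--transform is the right tool in Pollard's argument, but it is used to shrink $|B|$ (equivalently, to pass to $(A\cup B',\,A\cap B')$ after a shift) and then feed the identity above, not to peel one representation uniformly from every element of $AB$. Your base case and the saturated regime are fine; the inductive mechanism is the part that does not go through, and replacing it by the $|B|$--induction of Lemma~\ref{basic} is what actually proves the theorem.
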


In the prime case $\mu (G) =1$ and hence Green--Ruzsa Theorem
reduces to Pollard's Theorem \ref{pol1}.

Kemperman \cite{kempcompl} proved a non-abelian counter-part of
Kneser's Theorem  (see Theorem \ref{kneser} in Section 2)  stating
that there is a finite subgroup $H$ such that $ |AB| \ge
|A|+|B|-|H|$, and $aHb\subset AB$ for some $a\in A$ and $b\in B$.
Dicks--Ivanov \cite{Dicks} obtained a generalization of this result
in connection with a famous problem in Group Theory and formulated
the following:

\begin{theirconj}[Dicks--Ivanov \cite{Dicks2,Dicks3}]\label{Dicks}
Let $A,B$ be finite nonempty subsets of a group $G$ with $2\le
|B|\le |A|.$
Then one of the following conditions holds:
\begin{itemize}
 \item [(i)] $ |N_1 (A,B)|+|N_2 (A,B)| \ge 2(|A|+|B|-2)$,
 \item [(ii)] $N_2 (A,B)$ contains a  left coset with cardinality $\ge 3$.
 \end{itemize}
\end{theirconj}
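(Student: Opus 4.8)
\proof
We prove the statement for abelian $G$ (the setting of this note; the general case is a theorem of Grynkiewicz). Assume alternative (ii) fails, i.e. $N_2(A,B)$ contains no coset of order $\ge 3$, and let us establish (i). Since $2\le|B|\le|A|$ we have $t:=2\le\min(|A|,|B|)$, so the inequality stated in the abstract is available with this $t$: writing $N_1(A,B)=AB$, letting $\alpha$ be the largest order of a coset contained in $AB$ and $w=\min(\alpha-1,1)$, it gives
$$|N_1(A,B)|+|N_2(A,B)|\ \ge\ 2\bigl(|A|+|B|-2\bigr)+2-2\alpha+w,$$
with a strict inequality whenever $\alpha\ge 2$.

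If $\alpha\le 2$ we are done. For $\alpha=1$ one has $w=0$ and the right-hand side is exactly $2(|A|+|B|-2)$; for $\alpha=2$ one has $w=1$ and the right-hand side is $2(|A|+|B|-2)-1$, but strictness raises the left-hand side to $2(|A|+|B|-2)$. So (i) holds in both cases.

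Now suppose $\alpha\ge 3$ and fix a coset $gH\subseteq AB$ with $|H|=\alpha$. Let $\pi\colon G\to G/H$ be the quotient map, and for $a\in A$, $b\in B$ set $A_a=A\cap\pi^{-1}(\pi(a))$, $B_b=B\cap\pi^{-1}(\pi(b))$, each a subset of a coset of $H$. For $x\in gH$ the number $r_{A,B}(x)$ equals the sum of $r_{A_a,B_b}(x)$ over the distinct coset factorizations $\pi(g)=\pi(a)\pi(b)$ with $a\in A$, $b\in B$, and since $gH\subseteq AB$ the sumsets $A_aB_b$ over these factorizations cover $gH$. I would now argue by a dichotomy. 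If some factorization has $|A_a|+|B_b|\ge|H|+2$, then $r_{A_a,B_b}(x)\ge|A_a|+|B_b|-|H|\ge 2$ for every $x\in gH$ (an inclusion--exclusion estimate inside the coset $\pi^{-1}(\pi(a))$), so $gH\subseteq N_2(A,B)$, a coset of order $\alpha\ge 3$ --- contradicting the failure of (ii). In the complementary case every factorization is ``thin'', $|A_a|+|B_b|\le|H|+1$; combining this with $gH\not\subseteq N_2(A,B)$ (again a consequence of the failure of (ii)) tightly constrains the fibres of $A$ and $B$ above $\pi(g)$ and, with them, the local structure of $\pi(A)\pi(B)$, and I would then recover the deficit in the displayed bound by feeding this back through Kneser's theorem inside $H$ (to count the representations along $gH$) together with the inequality of the abstract applied to $(\pi(A),\pi(B))$ in $G/H$, thereby obtaining (i).

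The genuine obstacle is precisely this thin case with $\alpha\ge 3$: one has to show that the failure of (ii) confines the fibre decomposition of $A$ and $B$ over $G/H$ to a sufficiently efficient configuration, and the partially covered $H$-cosets --- the boundary of $\pi(A)\pi(B)$ --- must be accounted for without any loss. I expect this to reduce, after the Kneser step, to an induction on $|A|+|B|$ in which the genuinely periodic cases supply the structural alternative (ii) and the aperiodic ones the arithmetic bound (i).
\qed
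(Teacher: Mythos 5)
You are trying to prove something the paper itself does not prove: the Dicks--Ivanov statement appears here as a \emph{conjecture}, formulated for arbitrary (not necessarily abelian) groups, and the paper offers no argument for it beyond the remark that the abelian case follows from Grynkiewicz's theorem $|N_1(A,B)|+|N_2(A,B)|\ge 2(|A|+|B|-\max(2,|H|))$, where $H$ is the \emph{period of $N_2(A,B)$}. So there is no in-paper proof to match, and your opening restriction to abelian $G$ already narrows the claim. Within the abelian setting, your handling of $\alpha\le 2$ is correct: Theorem \ref{main} with $t=2$ gives exactly $2(|A|+|B|-2)$ when $\alpha=1$, and the strictness in case (II) upgrades the bound $2(|A|+|B|-2)-1$ to $2(|A|+|B|-2)$ when $\alpha=2$ (the hypothesis $1\in A\cap B$ is harmless by translation).

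The case $\alpha\ge 3$ is where the proposal stops being a proof. There Theorem \ref{main} only yields $|N_1|+|N_2|> 2(|A|+|B|-\alpha)$, which is strictly weaker than (i), and $\alpha$ measures cosets inside $AB=N_1(A,B)$, which says nothing about cosets inside $N_2(A,B)$ --- the object alternative (ii) is about. Your fat/thin dichotomy settles only the fat half (a fibre pair with $|A_a|+|B_b|\ge |H|+2$ does force $gH\subseteq N_2(A,B)$ by the pigeonhole of Lemma \ref{ph}). In the thin half you explicitly defer the whole argument (``I would then recover the deficit\dots''), and the plan is not even aimed at the right alternative: a coset $gH$ can lie in $N_2(A,B)$ through two \emph{different} thin factorizations of $\pi(g)$ (take $H=\Z_3\times\{0\}$ in $\Z_3\times\Z_4$, $A=\Z_3\times\{0,2\}$, $B=\{0\}\times\{0,2\}$), so in the all-thin case it may be (ii) rather than (i) that holds, and nothing you wrote excludes configurations where all factorizations are thin and $|N_1|+|N_2|<2(|A|+|B|-2)$. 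The missing idea is precisely the content of Grynkiewicz's formulation: one must control the period of $N_2(A,B)$ itself, which neither $\alpha$ nor a large coset of $AB$ provides; Kneser's Theorem \ref{kneser} applied to $AB$, as in your sketch, does not reach $N_2$. As written, the crucial case is an admitted gap, so the proposal is not a proof.
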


Grynkiewicz \cite[Theorem 3.1]{david} proved  that
 $ |N_1 (A,B)|+|N_2 (A,B)| \ge 2(|A|+|B|-\max (2,|H|))$, where $H$ is the period of $N_2(A,B)$.
Clearly this result implies the validity of the Dicks--Ivanov
Conjecture in the abelian case.
 Two  easier proofs of this conjecture in the abelian case
 were proposed later  by  Dicks--Ivanov \cite{Dicks3} and  by the authors.
 Also Grynkiewicz investigates in \cite{david}
 conditions implying the validity of the inequality
 $\sum _{1\le i \le t} |N_i (A,B)|\ge t(|A|+|B|- 2t+1)$.

Let $\alpha (A,B)$ denote the cardinality of the largest left coset
(non necessarily proper) contained in $AB$. In the spirit of the
Dicks--Ivanov Conjecture, we prove  the next local Green--Ruzsa type
Theorem:

\begin{theorem}\label{main}
Let $t$ be an integer and $A,B$ be finite  subsets of
an abelian group $G$ with $1\in  A\cap B$ and
 $|A|\ge |B|\ge t\ge 1.$ Set  $\alpha =\alpha(A,B)$ and $w=\min (\alpha-1,1)$.
Then \begin{equation}\sum _{1\le i \le t} |N_i (A,B)|\ge  t(|A|+|B|-
t-\alpha+1+w)-w.\label{MAIN}\end{equation} Moreover the inequality (\ref{MAIN}) is strict  in the
following cases:
\begin{itemize}
  \item [{\rm (I)}] $\alpha\ge 3$ and $t\ge 2$,
  \item [{\rm (II)}]  $\alpha\ge 2$ and $t= 2$.
\end{itemize}

\end{theorem}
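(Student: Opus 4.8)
Since the sets $N_i(A,B)$, their sizes and the quantity $\alpha(A,B)$ are all unchanged when $A$ and $B$ are translated separately, the hypothesis $1\in A\cap B$ is only a normalisation and I would discard it. The natural base of an induction is $t=1$, where (\ref{MAIN}) simply reads $|AB|\ge|A|+|B|-\alpha$: this is Kneser's theorem (Theorem~\ref{kneser}), since if $H$ denotes the period of $AB$ then $AB$ is a union of $H$-cosets, so $\alpha\ge|H|$ and $|AB|\ge|A|+|B|-|H|\ge|A|+|B|-\alpha$.

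For $t\ge2$ the plan is to first record a ``peeling'' inequality: for each $b\in B$,
$$\sum_{1\le i\le t}|N_i(A,B)|\ \ge\ |A|\ +\ \sum_{1\le i\le t-1}|N_i(A,B\setminus\{b\})|,$$
which follows at once from $r_{A,B}(x)=r_{A,B\setminus\{b\}}(x)+1$ for $x\in A+b$, $r_{A,B}(x)=r_{A,B\setminus\{b\}}(x)$ otherwise, and the identity $\min(m+1,t)=\min(m,t-1)+1$. Iterating it $t-1$ times and then applying Kneser's theorem to the remaining pair gives $\sum_{i\le t}|N_i(A,B)|\ge t|A|+|B|-t+1-\alpha$, which already implies (\ref{MAIN}) whenever $|B|\le t+\alpha-w-1$; thus one is reduced to the range $|B|\ge t+\alpha-w$, which for $\alpha=1$ is exactly the Green--Ruzsa/Pollard theorem. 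In that range I would run the main induction on $|A|+|B|$ (or, alternatively, via the isoperimetric method). If $N_t(A,B)=\emptyset$ then $\sum_{i\le t}|N_i|=|A|\,|B|\ge t(|A|+|B|-t)$, already more than the right side of (\ref{MAIN}); otherwise let $H$ be the period of $N_t(A,B)$. If $H\neq\{0\}$, then $N_t(A,B)$, and hence $AB$, contains an $H$-coset, so $\alpha\ge|H|\ge2$, and a Dyson ($e$-)transform --- which preserves $|A|+|B|$ --- lets one saturate $A$ and $B$ along $H$-cosets, after which the problem descends to $\bar A,\bar B\subseteq G/H$ (same level $t$, multiplicities scaled by $|H|$). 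If $H=\{0\}$, i.e. $N_t(A,B)$ is aperiodic, one must prove (\ref{MAIN}) essentially by hand, the key ingredient being a sharp Cauchy--Davenport/Pollard-type lower bound for the single cardinality $|N_t(A,B)|$, used in tandem with the case $t-1$ of the theorem and the decomposition $\sum_{i\le t}|N_i|=\sum_{i\le t-1}|N_i|+|N_t|$.

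The aperiodic case is where the real difficulty concentrates, and it is genuinely delicate. In the relevant range $|N_t(A,B)|$ may be almost as large as $|AB|$, so soft estimates such as $(t+1)|N_t|\le|A|\,|B|$ are useless; and, as the extremal families $A=B=K\times\{0,\dots,m-1\}$ ($K$ a finite subgroup) show, (\ref{MAIN}) can be an equality at level $t$ while holding strictly at level $t-1$, so $\sum_{i\le t-1}|N_i|$ and $|N_t|$ cannot be bounded independently --- the exact slack has to be carried through the induction. Tracking equality through the whole reduction shows that a deficiency in (\ref{MAIN}) would propagate down to the base cases, where (via the equality characterisation in Kneser's theorem) it is incompatible with $\alpha\ge3$ together with $t\ge2$, and with $\alpha\ge2$ together with $t=2$; this yields the strict inequalities (I) and (II).
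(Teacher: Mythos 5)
There is a genuine gap: what you have written is a plan, not a proof, and the two steps that carry all the weight are precisely the ones you leave unexecuted. Your preliminary reductions are fine --- the peeling inequality is correct, it disposes of the range $|B|\le t+\alpha-w-1$, the base case $t=1$ does follow from Kneser's Theorem \ref{kneser}, and the case $N_t(A,B)=\emptyset$ is trivial. But in the remaining (main) range you only describe what one \emph{would} do. In the periodic case, the claim that a Dyson transform ``lets one saturate $A$ and $B$ along $H$-cosets'' and then ``descends to $G/H$ (same level $t$, multiplicities scaled by $|H|$)'' does not hold up: the Dyson transform does not make sets $H$-periodic, and even if $A,B$ were full unions of $H$-cosets one would have $r_{A,B}(x)=|H|\,r_{\bar A,\bar B}(\bar x)$, so $\sum_{i\le t}\min(t,r_{A,B}(x))$ does not translate into the same statement at level $t$ in $G/H$ unless $|H|\mid t$; the bookkeeping of $\alpha$ and $w$ under this passage is also unaddressed. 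In the aperiodic case you explicitly state that the ``key ingredient'' is a sharp lower bound for the single quantity $|N_t(A,B)|$ in a general abelian group --- but no such bound is established (or known at the required precision), and you yourself flag this as ``where the real difficulty concentrates.'' The strictness assertions (I) and (II) are likewise only asserted (``tracking equality through the whole reduction shows\ldots''), with no argument. So the proposal cannot be accepted as a proof.

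For comparison, the paper takes a different and self-contained route that avoids both of your hard cases entirely: induction on $|B|$, with Pollard's submodular inequality (\ref{POLD3}) applied to the Dyson-transformed pair $A'=Aa^{-1}$, splitting $B$ into $A'\cap B$ (of size $v$) and $T=B\setminus A'$. The crucial device is the freedom to choose the split point $u$ in (\ref{POLD3}): taking $u=v$ makes the first summand \emph{exactly} computable as $v(|A|+|B|-v)$ by (\ref{POLD2}), while the second summand $\sum_{i\le t-v}|N_i(A'\setminus B,\,B\setminus A')|$ is handled by the induction hypothesis on the strictly smaller set $T$. Adding the two produces the right-hand side of (\ref{MAIN}) plus a surplus $v(\alpha-w-1)$, which immediately gives strictness for $\alpha\ge 3$; the single residual configuration $t=\alpha=2$, $v=1$ is then eliminated by a short Kneser-type analysis of $S$, $T$ and $A'T$. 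If you want to salvage your approach, you would need to either prove the quotient reduction rigorously (including the divisibility issue for $t$) and supply the aperiodic estimate, or --- more realistically --- replace the second half of your argument by the paper's choice of split point in (\ref{POLD3}).
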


 For $\alpha=1$ or $t=1$,  the inequality (\ref{MAIN}) is not necessarily
strict. By Theorem \ref{main}, the inequality (\ref{MAIN}) is strict
for $t>1$, unless $t\ge 3$ and $\alpha =2$. We suspect that it is
still strict in this case as well.

For $t=2$ Theorem \ref{main}, follows  from the result
of Grynkiewicz mentioned above.

\section{Preliminaries}
We shall need the following results:

\begin{lemma}\label{ph} (folklore)
Let $G$ be a finite  abelian group. Let $A,B$ be nonempty subsets such that
$|A|+|B|\ge |G|+t$. Then for every $x\in G$, $r_{A,B}(x)\ge t.$ In
particular $\sum _{x\in G} r_{A, B}(x)\ge t|G|$.
\end{lemma}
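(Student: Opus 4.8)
The plan is to establish the per-element inequality $r_{A,B}(x)\ge t$ for each fixed $x\in G$ by a single inclusion--exclusion (pigeonhole) count, and then sum over $G$ to obtain the final assertion. First I would unwind the definition: for a fixed $x\in G$, the quantity $r_{A,B}(x)=|(xB^{-1})\cap A|$ counts the elements $a\in A$ of the form $a=xb^{-1}$ with $b\in B$, i.e.\ the pairs $(a,b)\in A\times B$ with $ab=x$. The one observation meriting care is a cardinality identity: since $G$ is a group, left translation by $x$ and inversion are both bijections of $G$, so $|xB^{-1}|=|B|$.

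Next I would apply inclusion--exclusion to the two subsets $A$ and $xB^{-1}$ of $G$, namely
\[
|A\cap(xB^{-1})| = |A|+|xB^{-1}|-|A\cup(xB^{-1})|.
\]
Because $A\cup(xB^{-1})\subseteq G$ forces $|A\cup(xB^{-1})|\le |G|$, and using $|xB^{-1}|=|B|$ together with the hypothesis $|A|+|B|\ge |G|+t$, I obtain
\[
r_{A,B}(x)=|A\cap(xB^{-1})| \ge |A|+|B|-|G| \ge (|G|+t)-|G| = t .
\]
Since $x\in G$ was arbitrary, this proves $r_{A,B}(x)\ge t$ for every $x$, which is the first assertion.

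Finally, the ``in particular'' statement is immediate by summing the pointwise bound over the group:
\[
\sum_{x\in G} r_{A,B}(x) \ge \sum_{x\in G} t = t\,|G| .
\]
The honest remark here is that there is no genuine obstacle: the whole content is the pigeonhole observation that two subsets of $G$ whose sizes add up to more than $|G|$ must overlap, quantified by inclusion--exclusion. The only point requiring the group structure (rather than mere finiteness) is the identity $|xB^{-1}|=|B|$, which uses that inversion and translation are bijections; everything else is elementary counting.
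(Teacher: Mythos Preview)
Your proof is correct and is exactly the argument the paper has in mind: the paper's own proof is the one-line ``We have clearly $|(xB^{-1})\cap A|\ge t$,'' and your inclusion--exclusion computation $|A\cap xB^{-1}|\ge |A|+|B|-|G|\ge t$ is precisely the unpacking of that ``clearly.'' The summation step is likewise the intended reading of ``in particular.''
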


\proof We have clearly $|(xB^{-1})\cap A|\ge t.$ \qed

\begin{theirtheorem}[Kneser \cite{KN3, natlivre}]\label{kneser}
Let $G$ be an abelian group and let $A, B\subset G$ be finite
subsets of $G$ with $|AB|\le |A|+|B|-2$. Let $H$ be the period of $AB$.
Then  $|H|\ge 2$. Moreover
\begin{equation}\label{KN1}|AB|=|AH|+|BH|-|H|,
\end{equation}
\begin{equation}\label{KN2}N_2(A,B)=AB.
\end{equation}
\end{theirtheorem}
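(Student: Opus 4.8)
The statement is the classical Kneser addition theorem together with two complements, and everything hinges on the basic Kneser inequality $|AB|\ge |AH|+|BH|-|H|$, where $H=H(AB)$ is the period of $AB$. The plan is to establish this inequality first, and then to deduce $|H|\ge 2$, the equality (\ref{KN1}), and the identity (\ref{KN2}) from it, each time exploiting the hypothesis $|AB|\le |A|+|B|-2$.

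For the inequality I would induct on $|B|$ by means of the Dyson $e$-transform: for $e\in G$ set $A_e=A\cup eB$ and $B_e=e^{-1}A\cap B$. Inclusion--exclusion gives $|A_e|+|B_e|=|A|+|B|$, and commutativity of $G$ gives $A_eB_e\subseteq AB$. If some $e$ produces $\emptyset\neq B_e\subsetneq B$, then $(A_e,B_e)$ is a strictly smaller instance whose product lies in $AB$, and the inductive hypothesis applies; if no such $e$ exists, then $eB\subseteq A$ or $eB\cap A=\emptyset$ for every $e$, a saturation that forces $A$ to be a union of cosets of a nontrivial subgroup, where the bound is transparent. The genuine difficulty---the classical core of Kneser's theorem---is to track how the period $H(A_eB_e)$ relates to $H(AB)$ when the inductive bound is transferred back, and to settle the borderline branches; I expect this bookkeeping to be the main obstacle.

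Granting the inequality, $|H|\ge 2$ is immediate, since $A\subseteq AH$ and $B\subseteq BH$ give $|AB|\ge |A|+|B|-|H|$, which against $|AB|\le |A|+|B|-2$ forces $|H|\ge 2$. For (\ref{KN1}) I would pass to $\overline G=G/H$ and write $\overline A,\overline B$ for the images of $A,B$; then $\overline{AB}=\overline A\,\overline B$ has trivial period in $\overline G$, so the inequality applied in $\overline G$ reads $|\overline A\,\overline B|\ge |\overline A|+|\overline B|-1$. Were this strict, then $|AB|=|H|\,|\overline A\,\overline B|\ge |AH|+|BH|\ge |A|+|B|$, contradicting the hypothesis; hence $|\overline A\,\overline B|=|\overline A|+|\overline B|-1$, and multiplying by $|H|$ yields (\ref{KN1}).

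It remains to prove (\ref{KN2}) by a deficiency count. Put $d_A=|AH|-|A|\ge 0$ and $d_B=|BH|-|B|\ge 0$; rewriting (\ref{KN1}) gives $d_A+d_B=|H|-(|A|+|B|-|AB|)\le |H|-2$. Since $d_A=\sum_{\overline a\in\overline A}(|H|-|A\cap\overline a|)$ is a sum of nonnegative terms, each fibre $A\cap\overline a$ of $A$ over a class $\overline a\in\overline A$ satisfies $|H|-|A\cap\overline a|\le d_A$, and similarly for $B$; hence for any classes with $\overline a\,\overline b=\overline x$ the fibres $A'=A\cap\overline a$ and $B'=B\cap\overline b$ obey $|A'|+|B'|\ge 2|H|-(d_A+d_B)\ge |H|+2$. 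Transporting $A'$ and $B'$ into $H$ by base points and applying Lemma \ref{ph} with $t=2$ inside $H$, every element of the coset $\overline x$ already receives at least two representations from this single pair of fibres. Since $AB$ is the union of the cosets $\overline x\in\overline A\,\overline B$, every element of $AB$ has at least two representations, that is $N_2(A,B)=AB$.
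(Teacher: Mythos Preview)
Your argument is correct, and for the one part the paper actually proves, it coincides with theirs. The paper does not reprove Kneser's inequality or the equality (\ref{KN1}); those are simply cited. What the paper adds is the short derivation of (\ref{KN2}) from (\ref{KN1}), and that derivation is exactly your deficiency count: with $A_1=A\cap aH$ and $B_1=B\cap bH$ the fibres over two cosets whose product contains $x$, the paper bounds $(|H|-|A_1|)+(|H|-|B_1|)\le (|AH|-|A|)+(|BH|-|B|)=|AB|+|H|-|A|-|B|\le |H|-2$, which is your inequality $d_A+d_B\le |H|-2$ restricted to a single pair of fibres, and then concludes $|xA^{-1}\cap B|\ge 2$ by the same pigeonhole you invoke via Lemma~\ref{ph}.

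The only difference, then, is scope: you go further and sketch the Dyson $e$-transform induction for the Kneser inequality and the quotient argument for (\ref{KN1}), whereas the paper takes these as black boxes from \cite{KN3,natlivre}. Your sketches are sound (and your acknowledged ``main obstacle'' in tracking periods through the $e$-transform is indeed where the work lies), but since the paper treats them as prerequisites, there is nothing to compare on those parts.
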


The equality (\ref{KN2}) is known as the Kemperman-Scherk Theorem.
It is proved by Scherck \cite{scherck} in the abelian case  and by
Kemperman \cite{kempcompl} in the non-abelian one. But in the
abelian case it can be easily derived from (\ref{KN1}) as follows:

Take $x\in AB$. Then
$x\in A_1B_1$, where $A_1$ (resp. $B_1$ ) is the trace of some coset on $A$ (resp; $B$). Now
$2|H|-|A_1|-|B_1|\le |AH|+|BH|-|A|-|B|=|AB|+|H|-|A|-|B|\le |H|-2.$
It follows that $|xA^{-1}\cap B|\ge 2$.\qed

The following lemma is implicitly proved by Pollard in  \cite{P1}. We give few hints
for its proof in order to make the present work self-contained:

\begin{lemma}\label{basic}
Let $G$ be an abelian group. Let $A,B$ be nonempty sets with $|B|\le
|A|$. Let $t,v$ be integers such that $0\le v\le t$ and let $z\in
G$.

\begin{equation}\label{POLD1}
 |N_t(A,B)|=|N_t(zA,B)|,
 \end{equation}
 \begin{equation}
 \sum _{1\le i \le |B|}|N_i(A,B)|=|A||B|,\label{POLD2}
 \end{equation}
\begin{equation}
\sum _{1\le i \le t} |N_i (A,B)|\ge \sum _{1\le i \le v} |N_i (A\cap B,A\cup B)|+
\sum _{1\le i \le t-v} |N_i (A\setminus B,B\setminus A)|.\label{POLD3}
\end{equation}

\end{lemma}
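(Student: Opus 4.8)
The plan is to treat the three assertions in order, each reducing to a counting identity or a pointwise inequality summed over $G$. The translation identity (\ref{POLD1}) is immediate from the definition: since $(xB^{-1})\cap zA = z\bigl((z^{-1}xB^{-1})\cap A\bigr)$, we get $r_{zA,B}(x)=r_{A,B}(z^{-1}x)$, so that $x\mapsto z^{-1}x$ carries $N_t(zA,B)$ bijectively onto $N_t(A,B)$ (indeed $N_t(zA,B)=zN_t(A,B)$), and taking cardinalities gives (\ref{POLD1}).

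For the total-mass identity (\ref{POLD2}) I would first record the ``layer-cake'' formula
$$\sum_{1\le i\le t}|N_i(A,B)|=\sum_{x\in G}\min(r_{A,B}(x),t),$$
obtained by exchanging the order of summation in $\sum_i |\{x: r_{A,B}(x)\ge i\}|$. Because $r_{A,B}(x)=|(xB^{-1})\cap A|\le |B|$ for every $x$, no element has multiplicity exceeding $|B|$, so truncation at $t=|B|$ is vacuous and the left side of (\ref{POLD2}) equals $\sum_x r_{A,B}(x)$. Finally $\sum_x r_{A,B}(x)$ counts the ordered pairs $(a,b)\in A\times B$, each contributing to $x=ab$, so it equals $|A||B|$, which is (\ref{POLD2}).

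The heart of the lemma is (\ref{POLD3}), and the key step I would isolate first is a pointwise identity for the representation function. Write $C=A\cap B$, $D=A\cup B$, $A'=A\setminus B$, $B'=B\setminus A$, so $A=C\cup A'$ and $B=C\cup B'$ are disjoint unions and $D=C\cup A'\cup B'$. Partitioning each pair $(a,b)$ with $ab=x$ according to membership in $C$ or in $A',B'$ gives $r_{A,B}(x)=r_{C,C}(x)+r_{C,B'}(x)+r_{A',C}(x)+r_{A',B'}(x)$, while $r_{C,D}(x)=r_{C,C}(x)+r_{C,A'}(x)+r_{C,B'}(x)$. Comparing, all terms match except that $r_{A,B}$ carries $r_{A',C}(x)$ whereas $r_{C,D}$ carries $r_{C,A'}(x)$; commutativity of $G$ yields $r_{A',C}(x)=r_{C,A'}(x)$, hence the Pollard identity
$$r_{A,B}(x)=r_{A\cap B,\,A\cup B}(x)+r_{A\setminus B,\,B\setminus A}(x)\qquad(x\in G).$$

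With this identity in hand, (\ref{POLD3}) reduces to the elementary inequality $\min(a+b,t)\ge\min(a,v)+\min(b,t-v)$, valid for $a,b\ge0$ and $0\le v\le t$, since the two right-hand summands are each bounded by $a,b$ respectively and together by $v+(t-v)=t$. Applying it with $a=r_{A\cap B,A\cup B}(x)$ and $b=r_{A\setminus B,B\setminus A}(x)$, summing over $x$, and invoking the layer-cake formula on each of the three sides, gives exactly (\ref{POLD3}). The main obstacle, and the only place where the abelian hypothesis is essential, is the symmetry $r_{A',C}=r_{C,A'}$ underpinning the pointwise identity; once that is secured, the remainder is the bookkeeping of the truncation inequality.
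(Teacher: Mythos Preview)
Your proof is correct and follows essentially the same route as the paper: the layer-cake identity $\sum_{i\le t}|N_i(A,B)|=\sum_x\min(t,r_{A,B}(x))$, the pointwise decomposition $r_{A,B}=r_{A\cap B,A\cup B}+r_{A\setminus B,B\setminus A}$ (using commutativity to flip $r_{A',C}=r_{C,A'}$), and then the elementary truncation inequality $\min(a+b,t)\ge\min(a,v)+\min(b,t-v)$ summed over $x$. Your write-up is in fact more explicit than the paper's, which leaves both the layer-cake identity and the truncation inequality implicit.
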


\begin{proof}
The proof of (i) is obvious.
We have
 $$\sum _{1\le i \le |B|}|N_i(A,B)|=\sum_{x\in G} r_{A, B}(x)=\sum_{x\in A} |xB|=|A||B|,$$ and hence (ii) holds.
 We have also
\begin{eqnarray*}
\sum _{1\le i \le t}|N_i(A,B)|&=&\sum _{ x\in G}\min (t,r_{A,B}(x))\\&=& \sum _{ x\in G}\min (t,
r_{A\setminus B,B\setminus A}(x)+r_{A\cap B,B\setminus A}(x)+r_{A\cap B,B\cap A}(x)
+r_{A\setminus B,A\cap B}(x))\\
&=& \min (t,r_{A\setminus B,B\setminus A}(x)+r_{A\cap B,A\cup B}(x))\\
&\ge&  \sum _{1\le i \le v} |N_i (A\cap B,A\cup B)|+
\sum _{1\le i \le t-v} |N_i (A\setminus B,B\setminus A)|.
\end{eqnarray*}\qed\end{proof}

 We need the following lemma:

\begin{lemma}\label{reduct}
Let  $ A,B$  be  finite nonempty subsets of an abelian group $G$ and let $a\in G$. Then
 \begin{description}
 \item[{\rm (i)}] $N_t(aA, B)= aN_t(A,B),$
\item[{\rm (ii)}] $N_t((A\cap B),(A\cup B))\subset N_t(A,B)$.
\item[{\rm (iii)}] $\alpha (aA\cup B,aA\cap
B)\le \alpha (A,B).$
\end{description}
\end{lemma}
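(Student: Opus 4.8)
The plan is to read off all three statements directly from the definition of the representation function: (i) is a change of variables, (ii) a pointwise comparison of representation functions, and (iii) an elementary set containment combined with the translation invariance of coset sizes. No size hypothesis on $A,B$ is needed, and commutativity of $G$ is used throughout.

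For (i), I would compute, for each $x\in G$, using that multiplication by $a^{-1}$ is a bijection of $G$ and moving $a^{-1}$ past $xB^{-1}$,
$$r_{aA,B}(x)=|(xB^{-1})\cap aA|=|a^{-1}(xB^{-1})\cap A|=|((a^{-1}x)B^{-1})\cap A|=r_{A,B}(a^{-1}x).$$
Hence $r_{aA,B}(x)\ge t$ exactly when $r_{A,B}(a^{-1}x)\ge t$, i.e. $N_t(aA,B)=aN_t(A,B)$. This sharpens (\ref{POLD1}), which only records the equality of cardinalities.

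For (ii), it suffices to prove the pointwise inequality $r_{A\cap B,\,A\cup B}(x)\le r_{A,B}(x)$ for all $x$, as this yields $N_t(A\cap B,A\cup B)\subseteq N_t(A,B)$ for every $t$. Splitting the second argument along $A\cup B=A\sqcup(B\setminus A)$ and then $A=(A\cap B)\sqcup(A\setminus B)$ gives
$$r_{A\cap B,\,A\cup B}(x)=r_{A\cap B,\,A\cap B}(x)+r_{A\cap B,\,A\setminus B}(x)+r_{A\cap B,\,B\setminus A}(x),$$
while splitting $A=(A\cap B)\sqcup(A\setminus B)$ and then $B=(A\cap B)\sqcup(B\setminus A)$ gives
$$r_{A,B}(x)=r_{A\cap B,\,A\cap B}(x)+r_{A\cap B,\,B\setminus A}(x)+r_{A\setminus B,\,B}(x).$$
Since $G$ is abelian, $r_{A\cap B,\,A\setminus B}(x)=r_{A\setminus B,\,A\cap B}(x)$, and this is at most $r_{A\setminus B,\,B}(x)$ because $A\cap B\subseteq B$; subtracting the two displays finishes it. (Equivalently, one could exhibit an injection from the decompositions $x=cd$ with $c\in A\cap B,\ d\in A\cup B$ into those of $x$ over $(A,B)$, keeping $(c,d)$ when $d\in B$ and passing to $(d,c)$ when $d\in A\setminus B$; a short case check gives injectivity.) This bookkeeping is the only step that needs a little care.

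For (iii), the key observation is the containment $(aA\cup B)(aA\cap B)\subseteq a(AB)$. Distributing, $(aA\cup B)(aA\cap B)=(aA)(aA\cap B)\cup B(aA\cap B)$; from $aA\cap B\subseteq B$ one gets $(aA)(aA\cap B)\subseteq (aA)B=a(AB)$, and from $aA\cap B\subseteq aA$ one gets $B(aA\cap B)\subseteq B(aA)=a(AB)$ (commutativity in both identities), so the union lies in $a(AB)$. Finally, if $H$ is a subgroup and a coset $gH$ satisfies $gH\subseteq(aA\cup B)(aA\cap B)\subseteq a(AB)$, then $(a^{-1}g)H\subseteq AB$ is a coset of the same cardinality $|H|$. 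Hence the largest coset in $(aA\cup B)(aA\cap B)$ is no larger than the largest coset in $AB$, i.e. $\alpha(aA\cup B,aA\cap B)\le\alpha(A,B)$.
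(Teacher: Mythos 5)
Your proof is correct and complete. The paper itself gives no argument here (it simply declares ``The proof is easy''), and the three verifications you supply --- the change of variables $r_{aA,B}(x)=r_{A,B}(a^{-1}x)$ for (i), the pointwise bound $r_{A\cap B,\,A\cup B}(x)\le r_{A,B}(x)$ via the injection $(c,d)\mapsto(c,d)$ or $(d,c)$ for (ii), and the containment $(aA\cup B)(aA\cap B)\subseteq a(AB)$ together with translation-invariance of coset sizes for (iii) --- are exactly the routine details the authors are omitting, with (ii) being the only part that genuinely needs the care you give it.
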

The proof is easy.

\section{ Proof of Theorem \ref{main}}

The proof is by induction on $|B|.$

If $|B|=t$ the result follows from (\ref{POLD2}) and the inequality
is strict.

Assume first  that $AB=A$. Then $A$ has a decomposition
$A=A_1\cup\cdots\cup A_k$ as the union of $\subgp{B}$-cosets,  and
$\alpha \ge |\subgp{ B}|\ge |B|$.

 By Lemma \ref{ph} we   have
$\sum_{1\le i \le t} N_i(A_j,B)=t|A_j|$ and
$$
 \sum _{1\le i \le t} |N_i (A,B)|=\sum _{1\le i \le t, 1\le j\le k} |N_i
 (A_j,B)|=t|A|,$$

which proves the result and the inequality (\ref{MAIN}) is strict if $t>1$. So we
may assume $AB\neq A$.

Then there is  $b\in B$ and $a\in A$ such that $b\not\in Aa^{-1}$.
In particular $1\le |B\cap (Aa^{-1})|<|B|$. Put $A'=Aa^{-1}$ and
$v=|A'\cap B|$. Set $S=A'\setminus B$ and
$T=B\setminus A'$.

Using Lemma \ref{basic} and Lemma \ref{reduct}, for every $u$ with
$0\le u \le t$, we have
    \begin{eqnarray}
    \sum _{1\le i \le t} |N_i (A,B)|&\ge &
    \sum _{1\le i \le u} |N_i (A'\cap B,A'\cup B)|+ \sum _{1\le i \le t-u} |N_i (S, T)|.
    \label{EQAA}
    \end{eqnarray}
Suppose first that $t\le v$. Then, by  (\ref{EQAA}) applied with
$u=t$  and the induction hypothesis, we have
\begin{eqnarray*}
\sum_{1\le i \le t} |N_i (A,B)| &\ge&t(
|A|+|B|-t-\alpha+1+w)-w,\end{eqnarray*}
 Also the inequality (\ref{MAIN})is
strict in case (II) by induction and in case (I)   either by
induction or if $2=\alpha (A'\cup B, A'\cap B)<\alpha$.

% We will not insist on the case where $\alpha (A'\setminus B,B\setminus A')<\alpha$, since it is a much easier.

Suppose now that $t>v$.
Since  $v=|B\cap A'|$, we have by (\ref{POLD2})
\begin{equation}\label{eq:1}
\sum _{1\le i \le v} |N_i (A'\cap B,A'\cup B)|= v|A'\cup B|=
v(|A|+|B|-v).
\end{equation}

We now estimate the second summand in the left hand side of
(\ref{EQAA}) with $u=v$.

 By the induction hypothesis we have
\begin{equation}\label{eq:2}
\sum _{1\le i \le t-v} |N_i (S,T)|\ge (t-v)
(|A|+|B|-2v-(t-v)-\alpha+1+w)-w.
\end{equation}

By adding the second terms in  (\ref{eq:1}) and (\ref{eq:2})  we
have,
\begin{eqnarray*}
\sum _{1\le i \le t} |N_i (A,B)|&\ge& v(|A|+|B|-v)+(t-v)(|A|+|B|-t-v-\alpha+1+w)-w\\
&=& t(|A|+|B|-t-\alpha+1+w)+v(\alpha-w-1)-w.
\end{eqnarray*}

The inequality (\ref{MAIN}) is clearly strict if $\alpha \ge 3$.

 In order to complete the proof we have only
to check that this inequality is strict for $t=\alpha=2$ when
$v=1<t$. Suppose this is not the case.  From equality in (\ref{eq:2}) we have
$|ST|=|S|+|T|-2$. By Kneser Theorem, both $S$ and $T$ are
$H$--periodic by a subgroup $H$ of cardinality $|H|=\alpha=2$.

It follows by (\ref{KN2}) that
$$|N_2 (A',B)|\ge  |N_2
(S,T)|=|N_1 (S,T)|=|S|+|T|-2=|A|+|B|-4.$$ Moreover we may assume
$|N_1 (A,B)|\le |A|+|B|-1$ since otherwise $|N_1(A,B)|+|N_2
(A,B)|\ge 2(|A|+|B|-2)$ and we are done.

Hence $|N_1 (A',B)\setminus ST|=3$, and since $S$ and $T$ are
$H$--periodic, $N_1 (A',B)\setminus ST=\{1\}\cup aH,$ for some $a$.
We shall consider only the case $a\in S$, the other case being
essentially the same.

 Observe that $aH\cap A'T=\emptyset,$ otherwise $aH\subset N_2(A',B)$ because $aH=aH.1$.
 Also $1\notin A'T=A'TH.$ It follows that $|A'T|\le |A'B|-3,$ and hence $A'T=ST$.
 By Kneser's Theorem \ref{kneser},
 $$|ST|=|A'T|\ge |A'H|+|HT|-|H|=|A|+|H|+|T|-|H|=|S|+|T|-1,$$ a contradiction.
This completes the proof.\qed

\section*{Acknowledgement}
The authors would like to thank Professor W. Dicks for stimulating discussions
and comments on Pollard's inequality and also for several remarks
allowing us to remove some obscure parts in our first draft.

\end{document}